\crefname{hypothesis}{Hypothesis}{Hypotheses}
\title{A Rank Revealing Factorization Using Arbitrary Norms}
\author{Reid Atcheson\thanks{Numerical Algorithms Group
  (\email{reid.atcheson@nag.com}).}}
\DeclareMathOperator*{\argmax}{arg\,max}
\DeclareMathOperator*{\argmin}{arg\,min}
\DeclareMathOperator*{\linearspace}{span}
\newtheorem{conjecture}{Conjecture}
\begin{document}

\maketitle

\begin{abstract}

  The classic rank-revealing QR factorization factorizes a matrix $A$ as $AP=QR$ where $P$ 
  permutes the
  columns of $A$, $Q$ is an orthogonal matrix, and $R$ is upper triangular with non-increasing
  diagonal entries. This is called rank-revealing because careful choice of $P$ 
  allows the user to truncate the factorization for a low-rank approximation of $A$ 
  with an error term computed in the $l^2$ norm. 
  In this paper I generalize the QR factorization to use any arbitrary norm
  and prove analogous properties for $Q$ and $R$ in this setting. I then show an application
  of this algorithm to compute low-rank approximations to $A$ with error term in the $l^1$ norm
  instead of the $l^2$ norm. I provide Python code for the $l^1$ case as demonstration of
  the idea.

\end{abstract}

\begin{keywords}
  QR factorization,rank-revealing QR factorization,low-rank approximation
\end{keywords}

\begin{AMS}
  65F35, 65F30
\end{AMS}

\section{Introduction}

Low-rank approximation allows the user to compress an input matrix in a very informative way. 
The low-rank factors can provide useful information about the data which comprises the input
matrix, which forms the basis of Principal Component Analysis (PCA). The gold-standard of
low-rank approximations is the SVD factorization, which gives optimal low-rank approximations 
with respect to the Euclidean norm $\| \cdot \|_2$. 
The problem with SVD is that algorithms for it typically must be iterative in nature, or
even probabalistic. A non-iterative and deterministic algorithm which reveals rank
information can therefore be useful.

The rank-revealing QR factorization \cite{chan_rank_1987} is a deterministic and non-iterative
algorithm which provides rank information on the input matrix by way of the diagonal entries
of its upper triangular factor. It turns out this factorization can in fact be used 
directly for low-rank approximation also, bypassing the SVD entirely, and this has been
exploited heavily in areas such as hierarchical compression of matrices \cite{hackbusch_sparse_1999},\cite{hackbusch_introduction_2002}. Like with the SVD the quality of this low-rank
approximation is often best in the Euclidean norm $\| \cdot \|_2$ because the $QR$ factorization
is explicitly based on the Euclidean dot product. This optimality in the Euclidean norm
has some undesirable properties in other fields however.

For some applications of data analysis the optimality of a low-rank approximation 
in the Euclidean norm results in unfavorable low-rank factors, 
because outliers in data can quickly 
overwhelm the Euclidean norm of that data, resulting in poor approximations. This has
led to the field of "L1 PCA" which tries to find optimal low rank approximations
in the $l^1$ norm instead of the $l^2$ norm 
\cite{tsagkarakis_l1-norm_2017},\cite{markopoulos_efficient_2017},\cite{markopoulos_outlier-resistant_2018}. Unfortunately, since the $QR$ factorization is highly specialized to the Euclidean
norm this suggests that rank-revealing $QR$ strategies can not help in domain. Thus this
new area of low-rank approximation has moved in the direction of iterative or probabalistic
SVD-like algorithms \cite{markopoulos_outlier-resistant_2018}.

In this paper I show that the $QR$ factorization can be generalized to norms other
than the Euclidean norm. I derive the algorithm, state and prove analogous properties
of the resulting $Q$ and $R$ factors, and then show numerical results. This
yields a deterministic and non-iterative algorithm with rank-revealing properties
with the potential to give optimality in norms besides the Euclidean norm.

The paper is organized as follows. The main theory and algorithm is presented in
\cref{sec:main}, an implementation of this algorithm in python
for the special case of the $l^1$ norm is in section \cref{sec:alg}, experimental
results are in \cref{sec:experiments}, and the conclusions follow in
\cref{sec:conclusions}.

\section{Main results}
\label{sec:main}

I start by presenting the algorithm that this paper is based on. This algorithm
accepts a matrix $A\in\mathbb{R}^{m\times m}$ and any norm $\| \cdot \|$ on
$\mathbb{R}^m$ and returns a permutation $P$, 
an upper triangular matrix $R$ with nonincreasing diagonal, and $Q$ such that $AP=QR$. 
I then prove key facts about this algorithm (theorem \ref{thm:bigthm}) and state a 
conjecture (conjecture \ref{con:goodcondition}). I also prove that when 
the input norm is equal to the Euclidean norm, then the factorization reduces to
a classical $QR$ - in the sense that $Q$ becomes orthogonal. This is theorem
\ref{thm:equivalence}. I start first with the algorithm \ref{alg:rrqr} below.

\begin{algorithm}[H] \label{alg:rrqr}
\caption{Arbitrary-norm rank-revealing QR Factorization}

  Start with an input $A\in\mathbb{R}^{m\times m}$ and any norm 
  $\| \cdot \|$ on $ \mathbb{R} ^ m $.
  
  I use the notation $e_k$ to mean the $k$th column of the identity matrix, and I use

  \begin{align}
    A &= (A_1,A_2,A_3,\ldots,A_m) \\
    Q &= (Q_1,Q_2,Q_3,\ldots,Q_m) \\
    P &= (P_1,P_2,P_3,\ldots,P_m) 
  \end{align}

  to represent $A,P$ and $Q$ by their respective columns. Furthermore I define 
  $Q^i \ in \mathbb{R} ^ {m \times i} $ as the first $i$ columns of $Q$:

  \begin{displaymath}
    Q^i = (Q_1,Q_2,\ldots,Q_i).
  \end{displaymath}

  I now define the $P,Q$ and $R$ factors inductively as follows:

  \begin{align} \label{eq:recurrencebase}
    k &=\argmax_i \| A _i \| \\
    P_1 &= e_k \\
    Q_1 &= A_k / \| A_k \| \\
    R(1,1) &= \frac{1}{\| A_k \|}
  \end{align}

  and for any $1\leq j \leq m-1$ I define

  \begin{align}\label{eq:recurrence}
    k_j &=\argmax_i  \min_{c_j \in \mathbf{R}^j }\|  A_i - Q^j c_j \| \\
    c_j &=\argmin _ {c_j \in \mathbf{R}^j }\|  A_{k_j} - Q^j c_j \|  \\
    \gamma_j &= \| A_{k_j} - Q^jc_j \| \\
    P_{j+1} &= e_{k_j} \\
    Q^{j+1} &= (Q^j, \gamma_j ^{-1} ( A_{k_j} - Q^j c_j) ) \\
    R(j,1:j-1) &= c_j \\
    R(j,j) &= \gamma_j 
  \end{align}

\end{algorithm}

The key theoretical result of this paper is summarized in theorem \ref{thm:bigthm}. Following
this theorem is a conjecture which seems true based on numerical evidence supporting it
(see section \cref{sec:experiments}) but a full proof remains elusive. Finally
I prove in theorem \ref{thm:equivalence} that if $\| \cdot \|$ = $\| \cdot \|_2$
then algorithm \ref{alg:rrqr} outputs $Q$ as orthogonal.

\begin{theorem}[Arbitrary-norm Rank-Revealing $QR$ factorization ]\label{thm:bigthm}

  Suppose that $A\in\mathbb{R}^{m \times m}$,$\| \cdot \|$ is a norm, and that 
  $P,Q,R$ are output by algorithm \ref{alg:rrqr}.

  Then the following properties hold:

  \begin{equation} \label{eq:factorization}
    AP=QR
  \end{equation}
  \begin{equation} \label{eq:triangular}
    R \text{ is upper triangular with nonincreasing diagonal entries}
  \end{equation}
  There exists a constant $C_1>0$ independent of $A$ such that
  \begin{equation} \label{eq:normbound}
    \max _ { \| x \| = 1 } \| Qx \| \leq C_1
  \end{equation}

\end{theorem}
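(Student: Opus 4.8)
The plan is to dispatch the three claims in order, as the first two are essentially bookkeeping against the recurrence in \cref{alg:rrqr} while the third rests on a single structural observation about $Q$. For the factorization \eqref{eq:factorization} I would argue column by column. Since $P_{j+1} = e_{k_j}$, the $(j+1)$-th column of $AP$ is exactly $A_{k_j}$. Rearranging the update $Q^{j+1} = (Q^j,\,\gamma_j^{-1}(A_{k_j} - Q^j c_j))$ gives $A_{k_j} = Q^j c_j + \gamma_j Q_{j+1}$, which says precisely that $A_{k_j}$ equals $Q$ applied to the vector carrying the entries of $c_j$ in rows $1,\ldots,j$ and $\gamma_j$ in row $j+1$; this is the $(j+1)$-th column of $R$. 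The base case \eqref{eq:recurrencebase} settles the first column, and a short induction assembles these column identities into $AP = QR$.

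For \eqref{eq:triangular}, upper-triangularity is immediate, since the $(j+1)$-th column of $R$ is supported only on rows $1,\ldots,j+1$ by construction. For the nonincreasing diagonal I would exploit the nesting $\linearspace(Q^j) \subseteq \linearspace(Q^{j+1})$. The diagonal entry $\gamma_j = \max_i \min_{c} \|A_i - Q^j c\|$ is a worst-case best-approximation residual over the column space of $Q^j$; enlarging that space from $Q^j$ to $Q^{j+1}$ can only shrink each inner minimum, and hence their maximum, so that $\gamma_{j+1} \le \gamma_j$ for every $j$.

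The heart of the theorem is the uniform bound \eqref{eq:normbound}, and the key observation — which I would record first — is that every column of $Q$ has unit norm. Indeed $\|Q_1\| = \|A_k\|/\|A_k\| = 1$, and the normalization by $\gamma_j = \|A_{k_j} - Q^j c_j\|$ forces $\|Q_{j+1}\| = 1$ for each subsequent column. Granting this, for any $x$ the triangle inequality gives $\|Qx\| = \|\sum_i x_i Q_i\| \le \sum_i |x_i|\,\|Q_i\| = \|x\|_1$. I would then invoke equivalence of norms on the finite-dimensional space $\mathbb{R}^m$: there is a constant $C_1$, depending only on $\|\cdot\|$ and the dimension $m$, such that $\|x\|_1 \le C_1 \|x\|$ for all $x$. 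Combining the two estimates yields $\max_{\|x\|=1}\|Qx\| \le C_1$, and since $C_1$ is built entirely from the norm and the dimension it is manifestly independent of $A$.

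The step I expect to demand the most care is not any single inequality but the well-definedness that silently underlies all three parts: the argument assumes each $\gamma_j > 0$, so that the division in the $Q$-update is legitimate and the columns are genuine unit vectors. I would therefore note at the outset that $\gamma_j > 0$ precisely while $A_{k_j} \notin \linearspace(Q^j)$, i.e., while the selected columns remain linearly independent, and flag that if some $\gamma_j$ vanishes the process terminates at the revealed rank, where \eqref{eq:normbound} holds trivially for the truncated $Q$. The only remaining delicate point is the index bookkeeping in the recurrence, which must be lined up so that the residual coefficients $c_j$ and $\gamma_j$ land in the correct column of $R$.
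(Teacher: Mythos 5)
Your proposal is correct and follows essentially the same route as the paper's own proof: induction on columns for $AP=QR$, the nested-subspace (enlarging the approximation space shrinks each best-approximation residual) argument for the nonincreasing diagonal, and unit-norm columns of $Q$ plus the triangle inequality and finite-dimensional norm equivalence for the bound $\max_{\|x\|=1}\|Qx\|\le C_1$. Your closing remark on well-definedness (that each $\gamma_j>0$ is needed for the normalization, with termination at the revealed rank otherwise) is a point the paper silently assumes, so including it only strengthens the argument.
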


\begin{conjecture}[Inverse Bound] \label{con:goodcondition}
  Suppose that $A\in\mathbb{R}^{m \times m}$,$\| \cdot \|$ is a norm, and that 
  $P,Q,R$ are output by algorithm \ref{alg:rrqr}.

  Then there exists a constant $C_2>0$ that depends only on the norm $ \| \cdot \| $ such that

  \begin{equation} \label{eq:invnormbound}
    \min _ { \| x \| = 1 } \| Qx \| \geq C_2
  \end{equation}

\end{conjecture}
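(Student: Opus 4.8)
The plan is to recast the lower bound \eqref{eq:invnormbound} as a compactness statement that does not reference $A$ at all. From the recurrence \eqref{eq:recurrence} I will use only two geometric facts about the columns of $Q$. First, every column is a unit vector: $\|Q_1\|=1$ by the base case \eqref{eq:recurrencebase}, while the new column at step $j$ is $\gamma_j^{-1}(A_{k_j}-Q^jc_j)$, whose norm is $\gamma_j^{-1}\|A_{k_j}-Q^jc_j\|=\gamma_j^{-1}\gamma_j=1$. Second, each column sits at distance exactly one from the span of the columns that precede it. Indeed, since $c_j$ minimizes $\|A_{k_j}-Q^jc\|$, the substitution $c'=c_j+\gamma_j c$ gives
\begin{equation}\label{eq:distone}
  \operatorname{dist}\bigl(Q_{j+1},\linearspace(Q_1,\dots,Q_j)\bigr)
  =\min_{c}\|Q_{j+1}-Q^jc\|
  =\gamma_j^{-1}\min_{c'}\|A_{k_j}-Q^jc'\|=1 .
\end{equation}

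Next I would collect these invariants into a set that is manifestly independent of the input. Let $\mathcal{K}\subset(\mathbb{R}^m)^m$ be the set of all tuples $(Q_1,\dots,Q_m)$ with $\|Q_p\|=1$ for every $p$ and $\operatorname{dist}(Q_p,\linearspace(Q_1,\dots,Q_{p-1}))=1$ for every $p\ge2$. By the previous paragraph every $Q$ produced by algorithm \ref{alg:rrqr} (from an invertible $A$) lies in $\mathcal{K}$, and $\mathcal{K}$ depends only on the norm and the dimension $m$. On $\mathcal{K}$ consider the function $g(Q)=\min_{\|x\|=1}\|Qx\|$. Since the distance-one condition forces $Q_p\notin\linearspace(Q_1,\dots,Q_{p-1})$ for all $p$, the columns of every $Q\in\mathcal{K}$ are linearly independent, so $Q$ is invertible and $g(Q)>0$; moreover $g$ is continuous, being the minimum of the jointly continuous map $(Q,x)\mapsto\|Qx\|$ over the compact unit sphere. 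If $\mathcal{K}$ is compact, then $g$ attains a strictly positive minimum $C_2=\min_{\mathcal{K}}g>0$, and every algorithm output obeys $\min_{\|x\|=1}\|Qx\|=g(Q)\ge C_2$, which is exactly \eqref{eq:invnormbound} with a constant depending only on the norm (and the ambient dimension $m$).

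Compactness of $\mathcal{K}$ is the step I expect to carry the whole argument. Boundedness is immediate because all columns are unit vectors. For closedness, the key observation is that each map $d_p(Q)=\operatorname{dist}(Q_p,\linearspace(Q_1,\dots,Q_{p-1}))$ is upper semicontinuous, being an infimum over $c$ of the continuous functions $Q\mapsto\|Q_p-\sum_{i<p}c_iQ_i\|$. Hence for any convergent sequence $Q^{(n)}\to Q^\ast$ in $\mathcal{K}$ one gets $1=\limsup_n d_p(Q^{(n)})\le d_p(Q^\ast)$, while the trivial choice $c=0$ gives $d_p(Q^\ast)\le\|Q_p^\ast\|=1$; together these force $d_p(Q^\ast)=1$, so $Q^\ast\in\mathcal{K}$.

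The delicate point hiding here, and the reason the statement is not merely formal, is precisely that the distance-one constraints must survive passage to the limit \emph{without collapsing}: a priori a sequence of well-conditioned bases could converge to a singular tuple, and it is exactly the upper semicontinuity of $d_p$ that rules this out and keeps the limit inside the invertible locus. It is worth noting that the naive one-sided estimate available directly from \eqref{eq:distone}, namely $\|Qx\|\ge|x_\ell|$ for $\ell$ the largest index with $x_\ell\ne0$, is by itself insufficient, since the top coefficient of a unit vector can be arbitrarily small. This is why I would route the argument through compactness rather than attempt an explicit column-by-column estimate, which would additionally require controlling the distance of each column to the span of the \emph{later} ones, something the greedy recurrence does not obviously provide.
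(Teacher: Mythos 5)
The paper contains no proof of this statement to compare against: it is stated as Conjecture~\ref{con:goodcondition}, explicitly left open (``the bound \ref{eq:invnormbound} remains conjecture''), and supported only by the numerical evidence of Section~\ref{sec:experiments}. So your proposal must be judged on its own terms, and as far as I can tell it is correct and complete --- it settles the conjecture. The only implicit hypothesis is that $A$ is invertible, but that is already required for Algorithm~\ref{alg:rrqr} to produce output at all (if $A$ is singular then some $\gamma_j$ vanishes and the recurrence divides by zero), so it is built into the premise that ``$P,Q,R$ are output by the algorithm.''

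Checking the steps: the two invariants do hold for the output --- unit columns are immediate, and the distance-one property follows from your substitution $c'=c_j+\gamma_j c$, which is legitimate exactly because $\gamma_j>0$. The crux is closedness of $\mathcal{K}$, and your semicontinuity argument has the inequality pointing the right way; spelled out concretely, if a limit tuple $Q^\ast$ had $\operatorname{dist}\bigl(Q_p^\ast,\linearspace(Q_1^\ast,\dots,Q_{p-1}^\ast)\bigr)<1$, a witnessing coefficient vector $c^\ast$ would by continuity of $Q\mapsto\|Q_p-\sum_{i<p}c_i^\ast Q_i\|$ force $d_p(Q^{(n)})<1$ for large $n$, contradicting membership in $\mathcal{K}$; together with the trivial bound $d_p(Q^\ast)\le\|Q_p^\ast\|=1$ this keeps the limit in $\mathcal{K}$. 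Boundedness and the compactness of the unit sphere both rest on equivalence of norms in finite dimensions, which the paper itself already invokes, and the distance-one constraints force linear independence of the columns, so $g(Q)=\min_{\|x\|=1}\|Qx\|$ is continuous and strictly positive on the compact set $\mathcal{K}$, giving $C_2=\min_{\mathcal{K}}g>0$ depending only on the norm (and, through it, on $m$ --- which the statement permits, since the norm lives on $\mathbb{R}^m$). Your closing observation is also apt: the elementary estimate $\|Qx\|\ge|x_\ell|$, with $\ell$ the last nonzero coordinate, cannot give a uniform bound, and compactness is precisely what upgrades ``each $Q\in\mathcal{K}$ is invertible'' to ``uniformly invertible over $\mathcal{K}$.'' Note that the argument is soft --- it yields no explicit value of $C_2$, unlike the $l^2$ case where $C_2=1$ --- but the conjecture asks only for existence. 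I would encourage you to write this up carefully (stating the invertibility hypothesis and the trivial nonemptiness of $\mathcal{K}$); it would turn the paper's conjecture into a theorem.
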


Properties \ref{eq:factorization} and \ref{eq:triangular} are standard and precisely
match the classical $QR$ factorization with column pivoting.
Properties \ref{eq:normbound} and \ref{eq:invnormbound} perhaps
require more explanation. In the classical $QR$ factorization the matrix $Q$ is orthogonal
($Q^TQ = I$). Strictly speaking we could insist that $Q$ also be orthogonal in the above theorem,
but the utility of orthogonality is lost when using norms different from the $l^2$ norm. This
utility stems from the fact that the $l^2$ norm is derived from an inner product,
so orthogonality has strong implications on the conditioning of $Q$ in this norm.

Thus to  find an analogue to orthogonality I require that the matrix $Q$ be 
\emph{well conditioned}. The bounds \ref{eq:normbound} and \ref{eq:invnormbound} prove
that $Q$ is invertible (full-rank), but also that the conditioning of $Q$ does not
depend on the conditioning of $A,$ which the theorem allows to be highly numerically singular.
By way of example, if we were to state this theorem for $ \| \cdot \|  = \| \cdot \|_2 $ then
we would actually have $C_1 = C_2 = 1$.

I now prove theorem \ref{thm:bigthm}, minus the conjecture:
\begin{proof}

  To prove equation \ref{eq:factorization} note that $AP^1 = Q^1R(1,1)$ follows directly
  from the base case definitions of these quantities. Now assume $AP^j = Q^jR(1:j,1:j)$  
  for some $j.$ Then

  \begin{align*}
    Q^{j+1}R(1:j+1,1:j+1) &= (Q^j,Q_{j+1})
    \begin{bmatrix}
       R(1:j,1:j) & c_j \\
      0              & \gamma_j
    \end{bmatrix} \\
    &= (Q^j R(1:j,1:j),Q_{j}c + \gamma_j Q_{j+1}) \\
    &= (A P^j,A_{k_j}) \\
    &= A P^{j+1}
  \end{align*}

  For \ref{eq:triangular} it's clear that $R$ is upper triangular, but to show that its
  diagonal entries are nonincreasing observe that from the optimality property of $c_j$ we have
  \begin{align*}
    R(1,1) &=    \argmax_i \| A_i \| \\
           &\geq \| A_{k_1} \| \\
           &\geq \| A_{k_1} - Q^1 c \| \\
           &= R(2,2)
  \end{align*}

  and for any $j>1$:

  \begin{align*}
    R(j,j) &=    \max_i \min _ {c_j \in \mathbf{R}^j }\|  A_i - Q^j c_j \|  \\
           &\geq \max_i \min _ {c_{j+1} \in \mathbf{R}^{j+1} }\|  A_i - Q^{j+1} c_{j+1} \|  \\
           &= R(j+1,j+1)
  \end{align*}
  and finally for the conditioning properties \ref{eq:normbound} and \ref{eq:invnormbound}
  observe that if $\|x\|=1$ then
  \begin{align*}
    \|Qx\| &=    \| \sum _{i=1}^m Q_i x_i \| \\
           &\leq  \sum_{i=1}^m \| Q_i x_i \| \\
           &\leq  \sum_{i=1}^m \| Q_i\| \| x_i \| \\   
           &\leq  \| x \| _1 
  \end{align*}
  where the final inequality is a consequence of Holder's inequality. Finally we may apply norm
  equivalence between all norms in finite dimensional spaces to choose $C_1>0$ such that
  $\| x \| _1 \leq C_1 \|x \|$ holds for all $x$ to complete the proof of \ref{eq:normbound}.
  The bound \ref{eq:invnormbound} remains conjecture, but is supported by numerical evidence
  in section \cref{sec:experiments}
\end{proof}

\begin{theorem}[Classic QR as Special Case ]\label{thm:equivalence}
  Suppose that $A\in\mathbb{R}^{m \times m}$,$\| \cdot \| _ 2$ is the
  $l^2$ norm, and that  $P,Q,R$ are output by algorithm \ref{alg:rrqr}.

  Then $Q$ is orthogonal, i.e. $Q^TQ = I$.
\end{theorem}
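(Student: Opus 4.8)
The plan is to proceed by induction on $j$ and prove the stronger statement that each partial matrix $Q^j$ has orthonormal columns, i.e. $(Q^j)^T Q^j = I_j$, for every $1 \le j \le m$. Taking $j = m$ then yields $Q^T Q = I$, since $Q = Q^m$ is square and having orthonormal columns is equivalent to orthogonality in that case.

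For the base case I would observe that $Q^1 = (Q_1)$ with $Q_1 = A_k / \|A_k\|_2$, so $Q_1^T Q_1 = 1$ and the claim holds trivially. For the inductive step I would assume $(Q^j)^T Q^j = I_j$ and examine the newly constructed column $Q_{j+1} = \gamma_j^{-1}(A_{k_j} - Q^j c_j)$. The crucial observation is that, in the $l^2$ norm, the vector $c_j = \argmin_c \|A_{k_j} - Q^j c\|_2$ is the solution of a linear least-squares problem and is therefore characterized by the normal equations: differentiating $\|A_{k_j} - Q^j c\|_2^2 = (A_{k_j} - Q^j c)^T(A_{k_j} - Q^j c)$ in $c$ and setting the gradient to zero gives $(Q^j)^T(A_{k_j} - Q^j c_j) = 0$. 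Writing $r_j = A_{k_j} - Q^j c_j$ for the residual, this says precisely that $(Q^j)^T r_j = 0$, i.e. $r_j$ is orthogonal to every column of $Q^j$. Since $Q_{j+1} = r_j/\|r_j\|_2 = r_j/\gamma_j$, it follows that $(Q^j)^T Q_{j+1} = 0$ and $Q_{j+1}^T Q_{j+1} = 1$.

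To finish the step I would assemble the Gram matrix blockwise: the top-left $j \times j$ block of $(Q^{j+1})^T Q^{j+1}$ is $(Q^j)^T Q^j = I_j$ by the inductive hypothesis, the off-diagonal block is $(Q^j)^T Q_{j+1} = 0$ by the residual orthogonality just established, and the bottom-right entry is $Q_{j+1}^T Q_{j+1} = 1$ by the normalization. Hence $(Q^{j+1})^T Q^{j+1} = I_{j+1}$, closing the induction.

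The main point to get right is the orthogonality of the least-squares residual to the already-constructed columns; this is exactly where the inner-product structure of the $l^2$ norm enters, and it has no analogue for a general norm, which is why the conclusion is special to this case. Everything else is bookkeeping. I would also remark that the division by $\gamma_j$ is legitimate precisely when $\gamma_j > 0$, which holds whenever $A_{k_j}$ does not already lie in the span of $Q^j$; in the degenerate case the relevant column is already captured and the orthonormality of the constructed columns is unaffected.
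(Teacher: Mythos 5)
Your proposal is correct and rests on exactly the same key idea as the paper's proof: in the $l^2$ case the minimizer $c_j$ is the orthogonal projection coefficient vector, so the residual $A_{k_j} - Q^j c_j$ (and hence $Q_{j+1}$) is orthogonal to $\linearspace(Q_1,\ldots,Q_j)$. Your write-up is simply a more explicit version, spelling out the normal equations, the induction on the Gram matrix $(Q^{j+1})^T Q^{j+1}$, and the normalization, where the paper states the projection argument in one line.
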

\begin{proof}
  By the inductive definition of $Q$ in \ref{eq:recurrence} we have
  \begin{equation}
    Q^{j+1} = (Q^j, \gamma_j ^{-1} ( A_{k_j} - Q^j c_j) )
  \end{equation}
  Recall that $c_j$ solves the minimization problem
  \begin{equation}
    c_j =\argmin _ {c_j \in \mathbf{R}^j }\|  A_{k_j} - Q^j c_j \|  \\
  \end{equation}
  which means it is forming the $l^2$ projection of $A_k$ onto the space $V=\linearspace(Q_1,\ldots,Q_j)$.
  Since $Q_{j+1}$ is the residual of this projection, it is orthogonal to the whole space $V$.
\end{proof}

\section{Implementation for $l^1$ norm using linear programming} \label{sec:alg}

The key ingredient of algorithm \ref{alg:rrqr} is the ability
to compute solutions to minimum-norm linear problems such as $\argmin \| b - Ax \|$. 
For the $l^2$ case there are already established and robust algorithms for this problem, but
it's less obvious for other norms. For the $l^1$ norm we can cast it as a linear program. In 
other words:

\begin{displaymath}
  \argmin_x \| b - Ax \|_1
\end{displaymath}

is equivalent to the linear program

\begin{align*}
  \argmin_t \sum _{i=1}^m t _ i \\
  \text{Subject to} \\
  b - Ax &\leq t \\
  Ax  - b &\leq t \\
  t &\geq 0 
\end{align*}

This is implemented using Python in the 
appendix at listing \ref{impl:opensource}, this uses the open source tools 
NumPy \cite{walt_numpy_2011} and SciPy \cite{jones_scipy:_2001}. 

The linear program approach is correct but does not seem 
to scale well for larger matrices $A$ Thus I also provide
a Python implementation that uses the NAG numerical
library \cite{naglibrary} in listing \ref{impl:nag}. 
Furthermore I have also implemented the relations \ref{eq:recurrencebase} and \ref{eq:recurrence}
as a function in python in listing \ref{impl:rrqr}. This implementation can use
either the NAG $l^1$ solver from \ref{impl:nag} or the open source $l^1$ solver
from \ref{impl:opensource} by changing the value of the \emph{l1alg} parameter.

I now proceed to show numerical results of this algorithm.

\section{Experimental results}
\label{sec:experiments}

The results below are designed to validate some of the theoretical properties proven
and asserted earlier. These include properties like the well-conditioning of $Q$ and the
non-increasing property for the diagonal of $R$. I also include results on
low-rank approximation from this factorization as that was the primary motivation
of deriving this algorithm.

\subsection{Diagonal Entries of R}

These experiments test the theorem result \ref{eq:triangular}. Here I take 
$A \in \mathbb{R}^{m \times m }$ constructed explicitly as an SVD factorization
$A = U\Sigma V^T$ with diagonal entries of $\Sigma$ varying in relative size, which I indicate
with $\sigma_m \argmin_i \Sigma _{i,i}$ and $\sigma_1 = \argmax_i \Sigma_{i,i}$.

If $R$ truly has rank-revealing properties then it should exhibit rapid decay of diagonal entries
when $A$ becomes progressively more singular.

\begin{figure}[H]
  \centering
  \begin{minipage}[b]{0.4\textwidth}
        \includegraphics[scale=0.3]{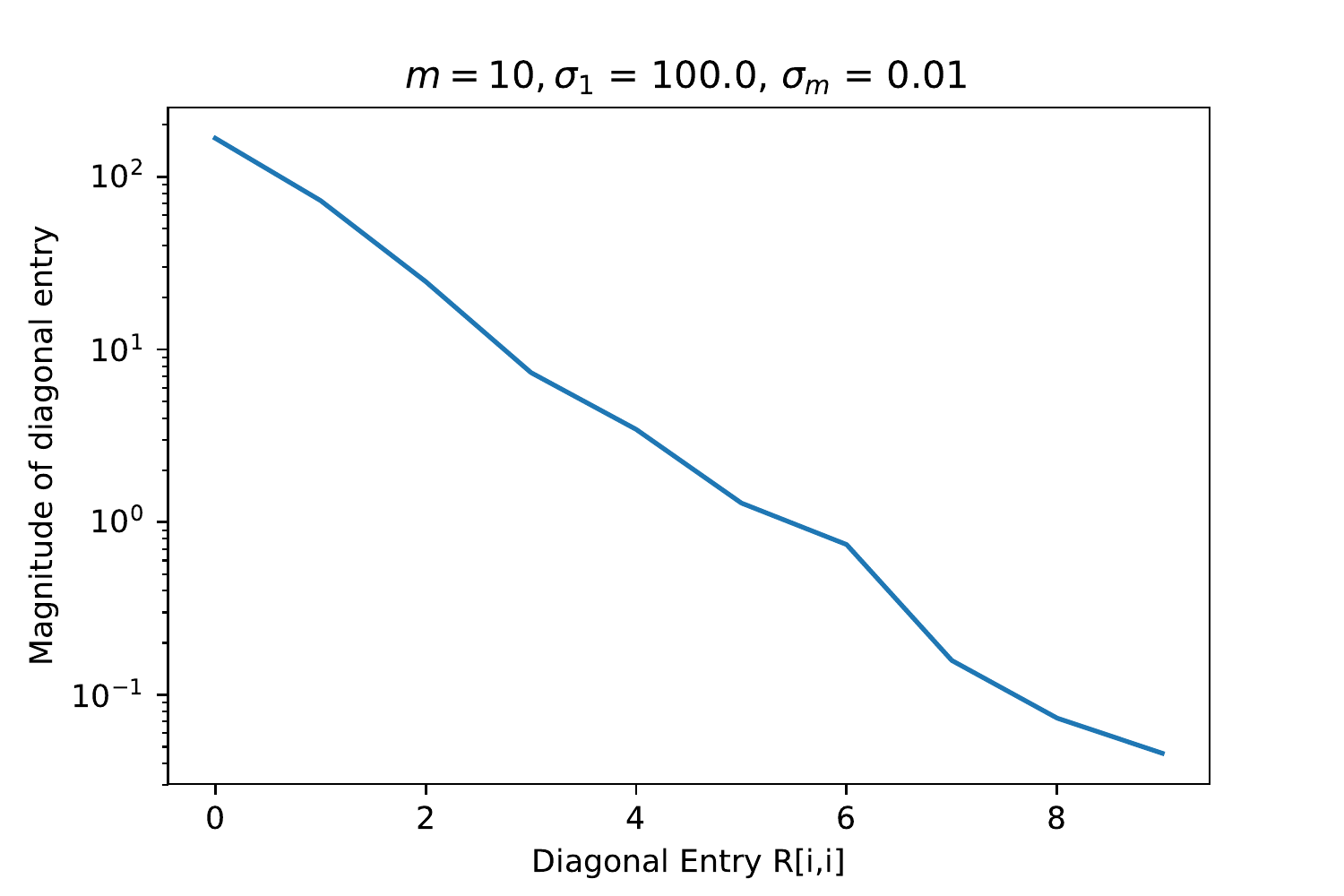}
        \includegraphics[scale=0.3]{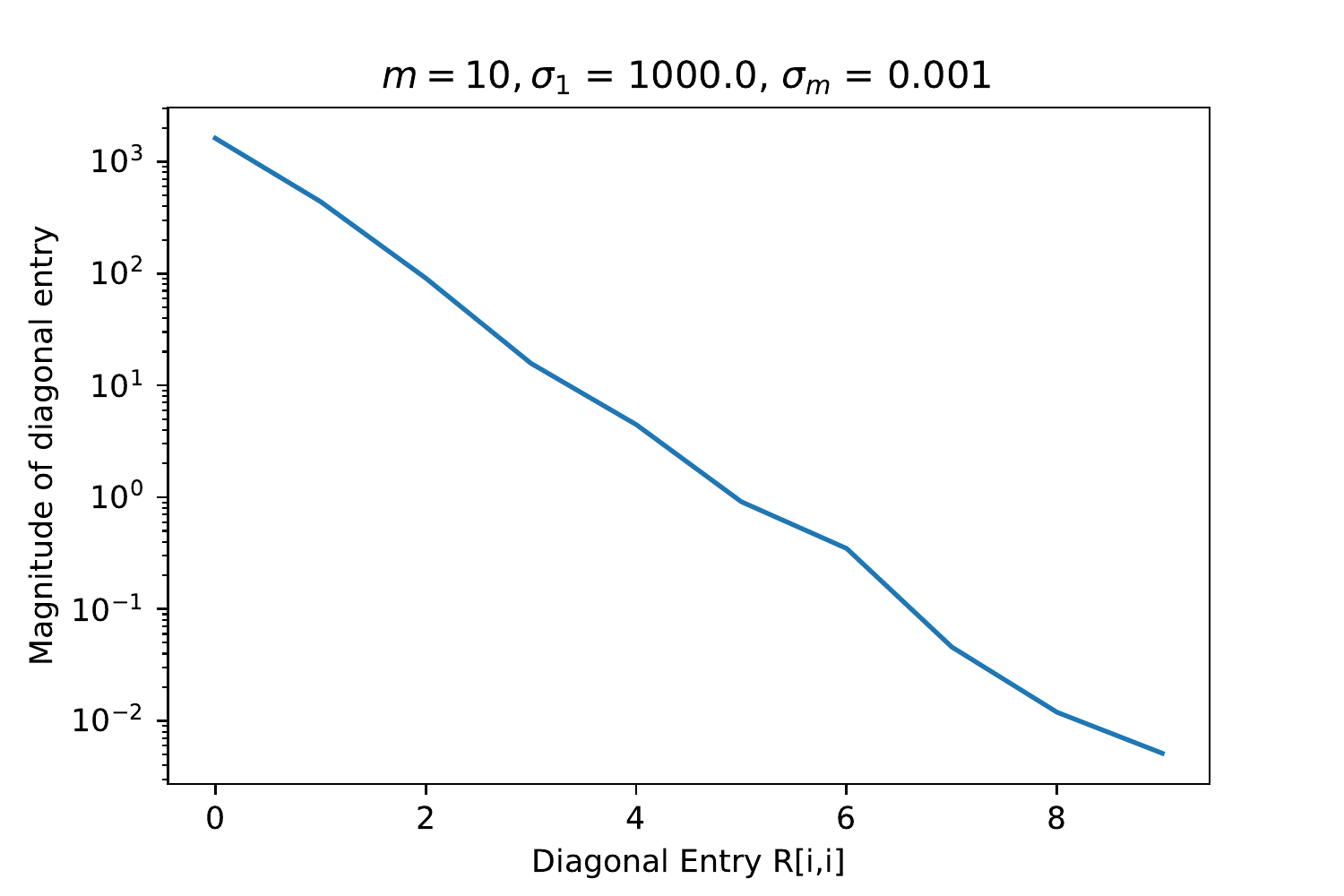}
        \includegraphics[scale=0.3]{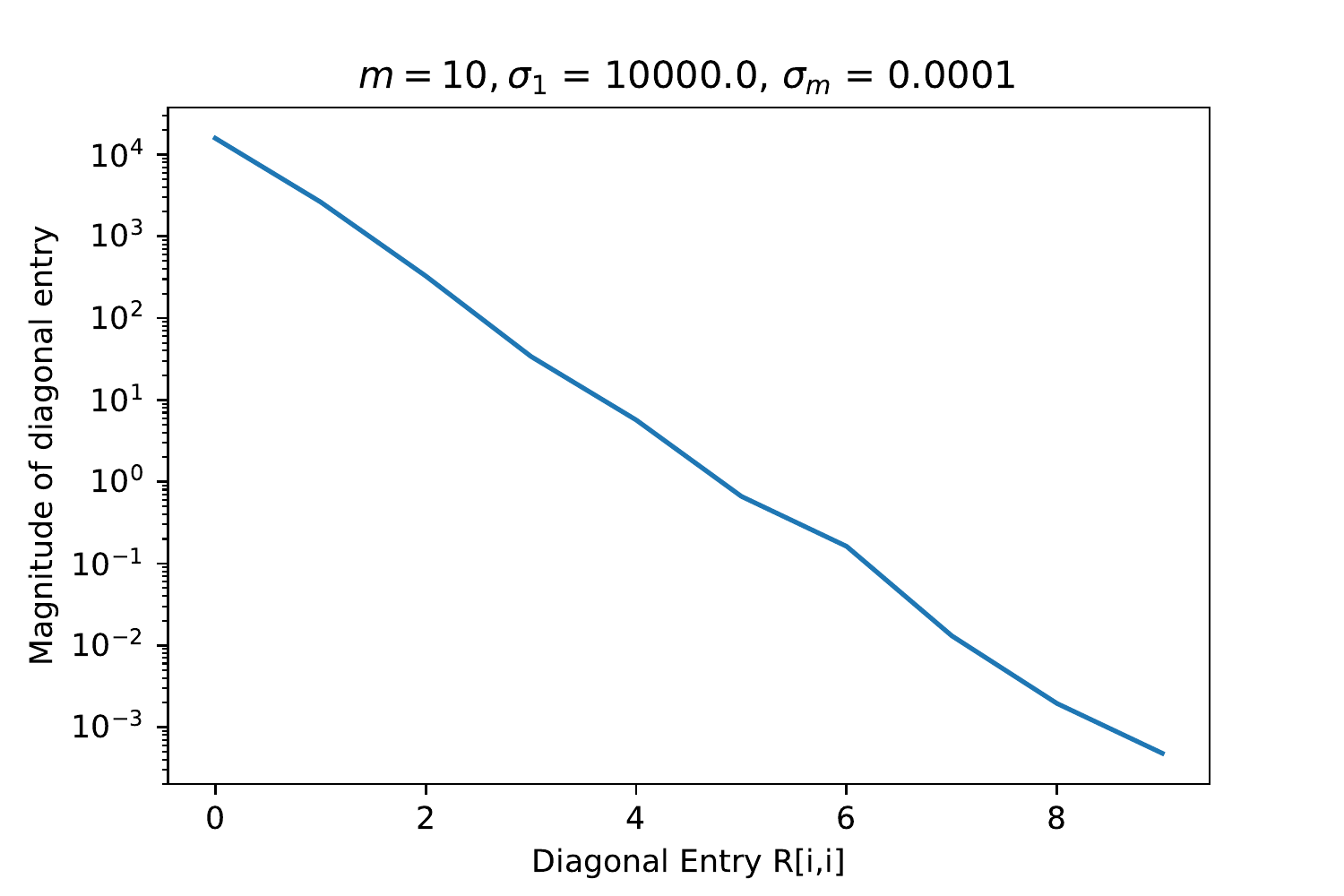}
        \caption{Decay of diagonal entries for $m=10$}
  \end{minipage} \qquad
  \begin{minipage}[b]{0.4\textwidth}
        \includegraphics[scale=0.3]{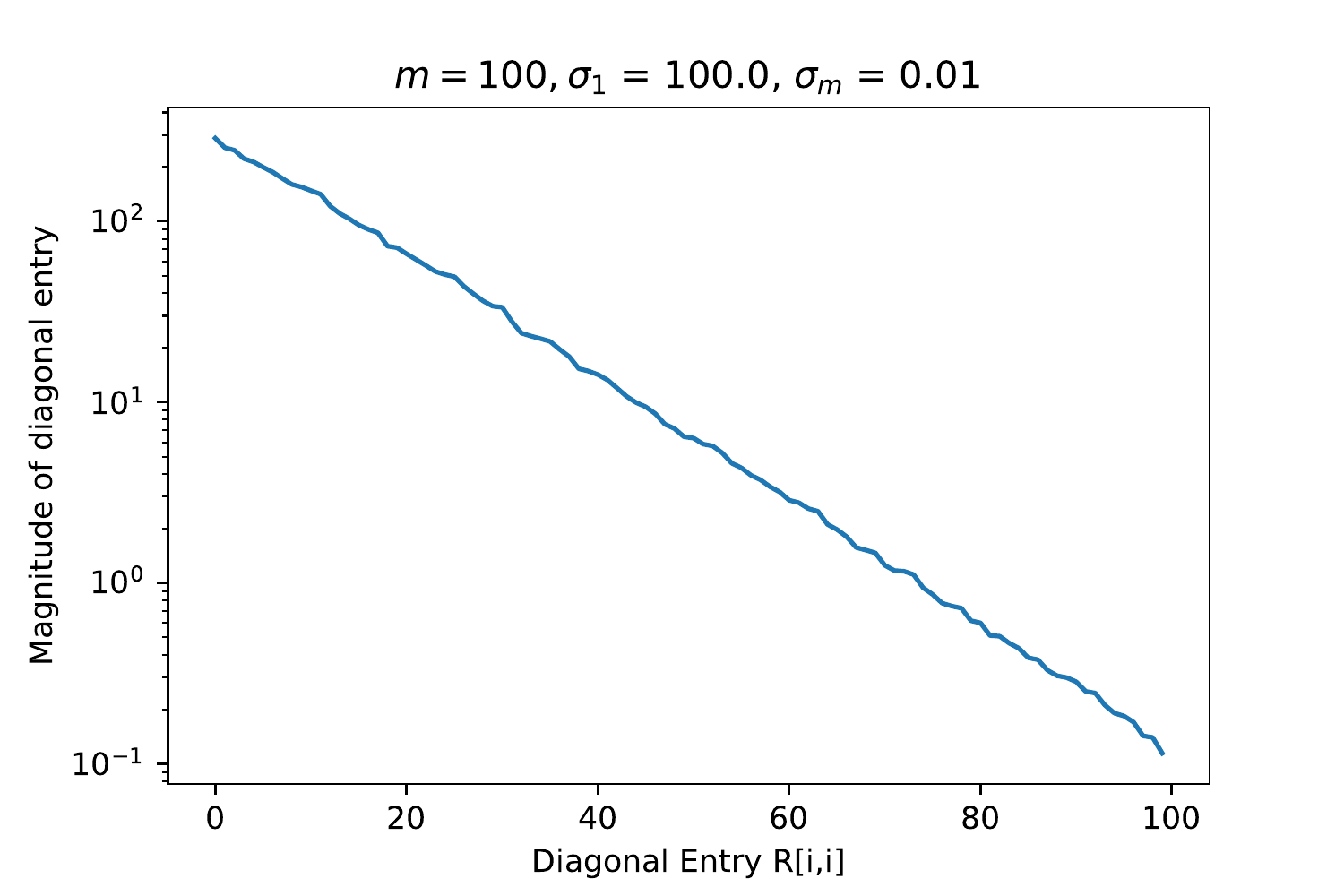}
        \includegraphics[scale=0.3]{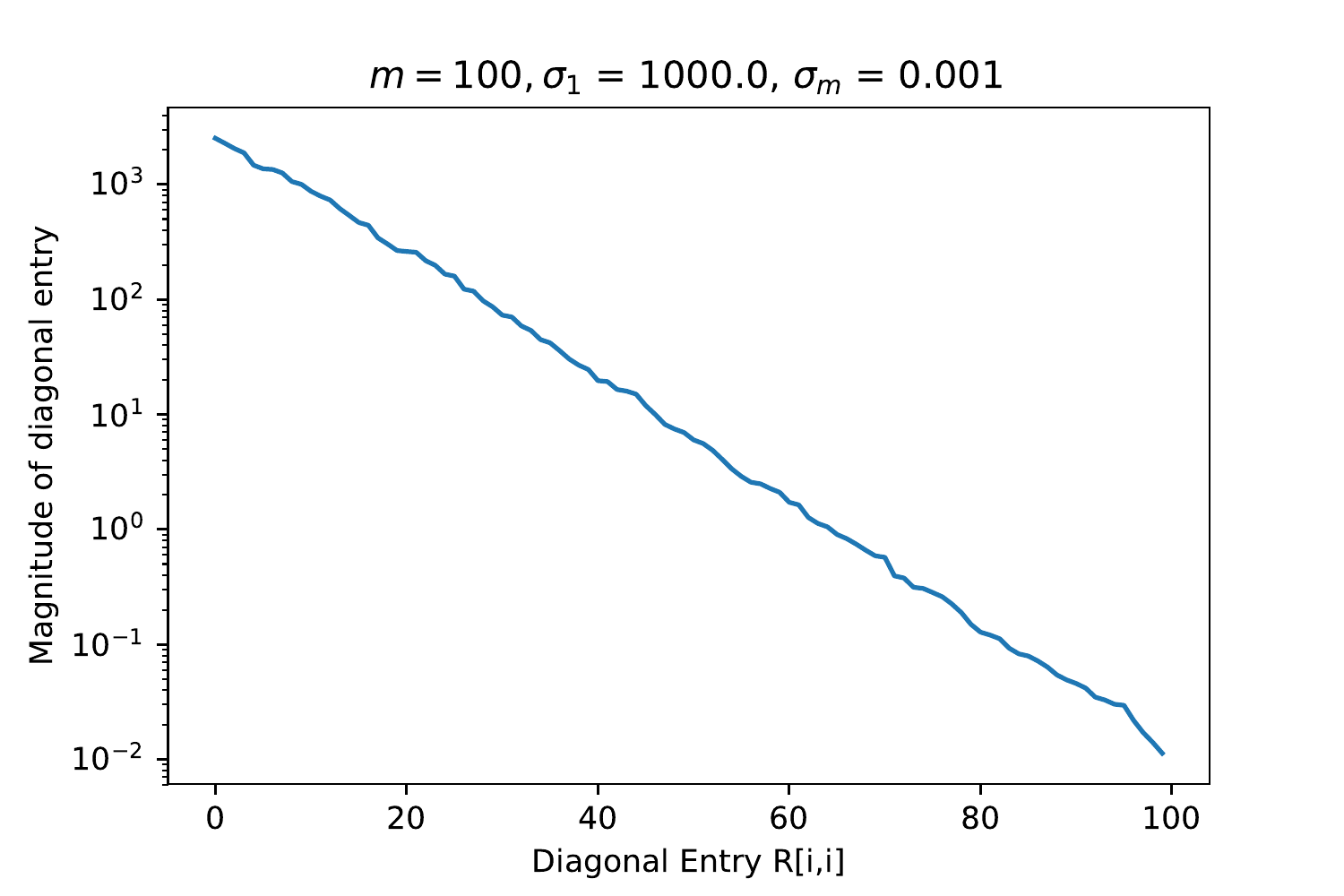}
        \includegraphics[scale=0.3]{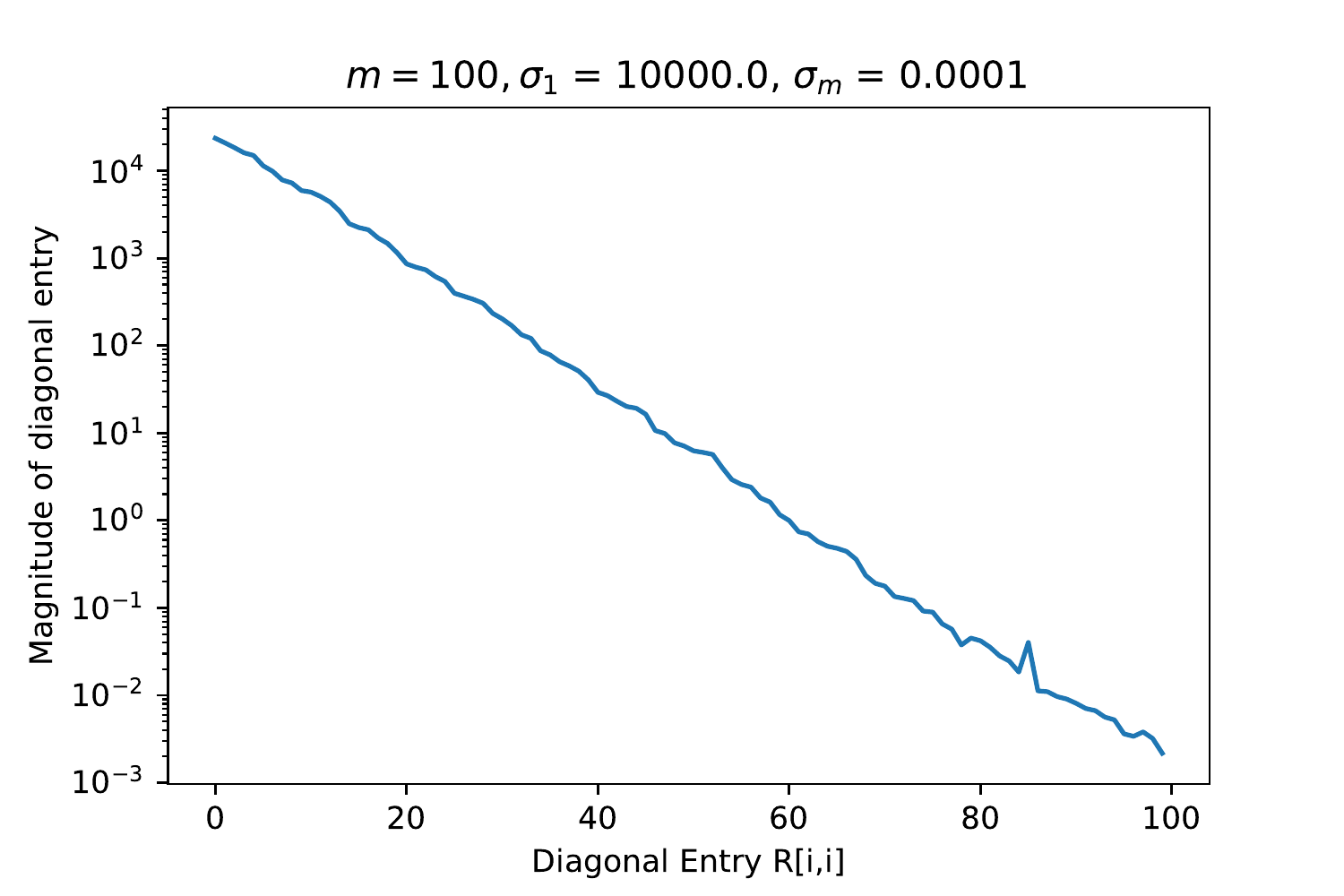}
        \caption{Decay of diagonal entries for $m=100$}
  \end{minipage}
\end{figure}

These results suggest that $R$ is capturing low rank information.

\subsection{Conditioning of Q}
An important part of successful rank-revealing factorization $AP=QR$ 
is the conditioning of $Q$ should be independent of the conditioning of $A$.
The key theoretical result which would prove this would be \ref{eq:invnormbound}, but
unfortunately I was unable to prove this. Here I give numerical evidence that it
does appear to be true.

I take $A \in \mathbb{R}^{m \times m }$ constructed explicitly as an SVD factorization
$A = U\Sigma V^T$ with diagonal entries of $\Sigma$ varying in relative size. I compute the
condition numbers $\|A\|_1 \|A^{-1} \|_1$,$\|Q\|_1 \|Q^{-1} \|_1$ and plot them against each other
in figure \ref{fig:cond}.

\begin{figure}[H]
  \centering
  \includegraphics[scale=0.5]{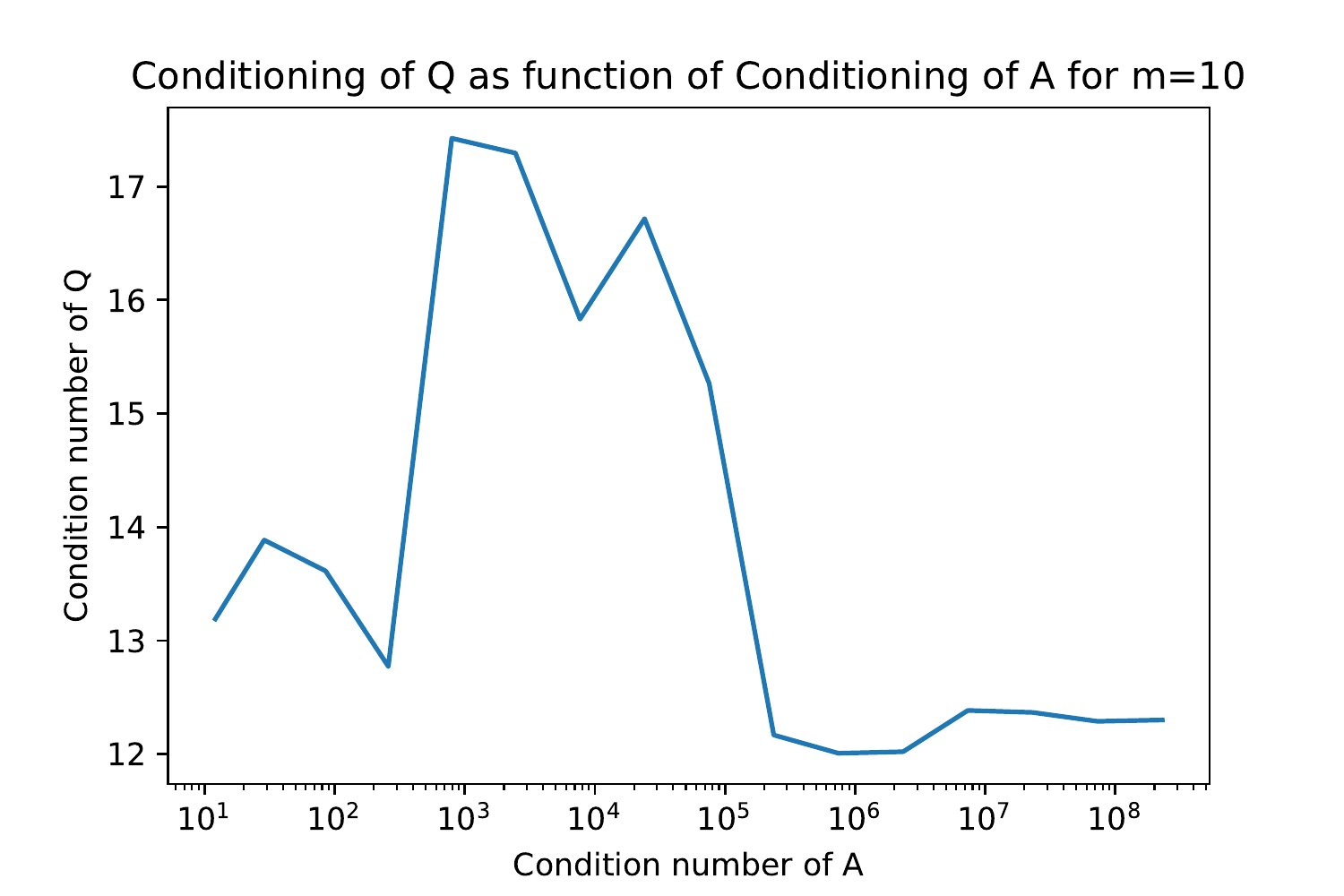}
  \includegraphics[scale=0.5]{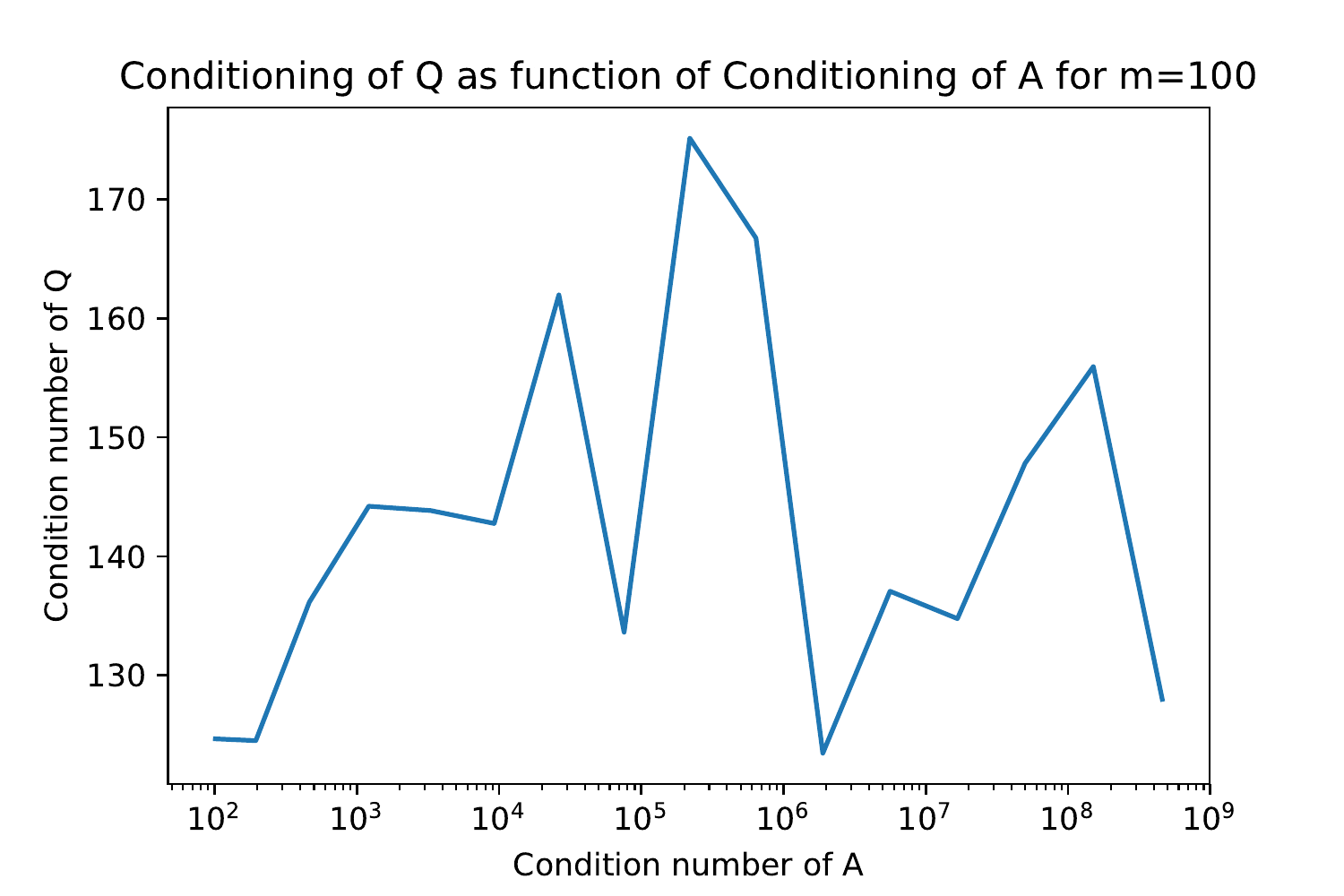}
  \caption{Conditioning of $Q$ compared to conditioning of $A$ for $m=10$ and $m=100$}
  \label{fig:cond}
\end{figure}

\subsection{Factorization error}
Next I illustrate that the factorization error $\|AP - QR\|$ also does not depend on the
conditioning of $A$.  

I take $A \in \mathbb{R}^{m \times m }$ constructed explicitly as an SVD factorization
$A = U\Sigma V^T$ with diagonal entries of $\Sigma$ varying in relative size. I compute the
condition numbers $\|A\|_1 \|A^{-1} \|_1$, and factorization errors
$ \|AP - QR \|_1 $ and plot them against each other in figure \ref{fig:error}

\begin{figure}[H]
  \centering
  \includegraphics[scale=0.5]{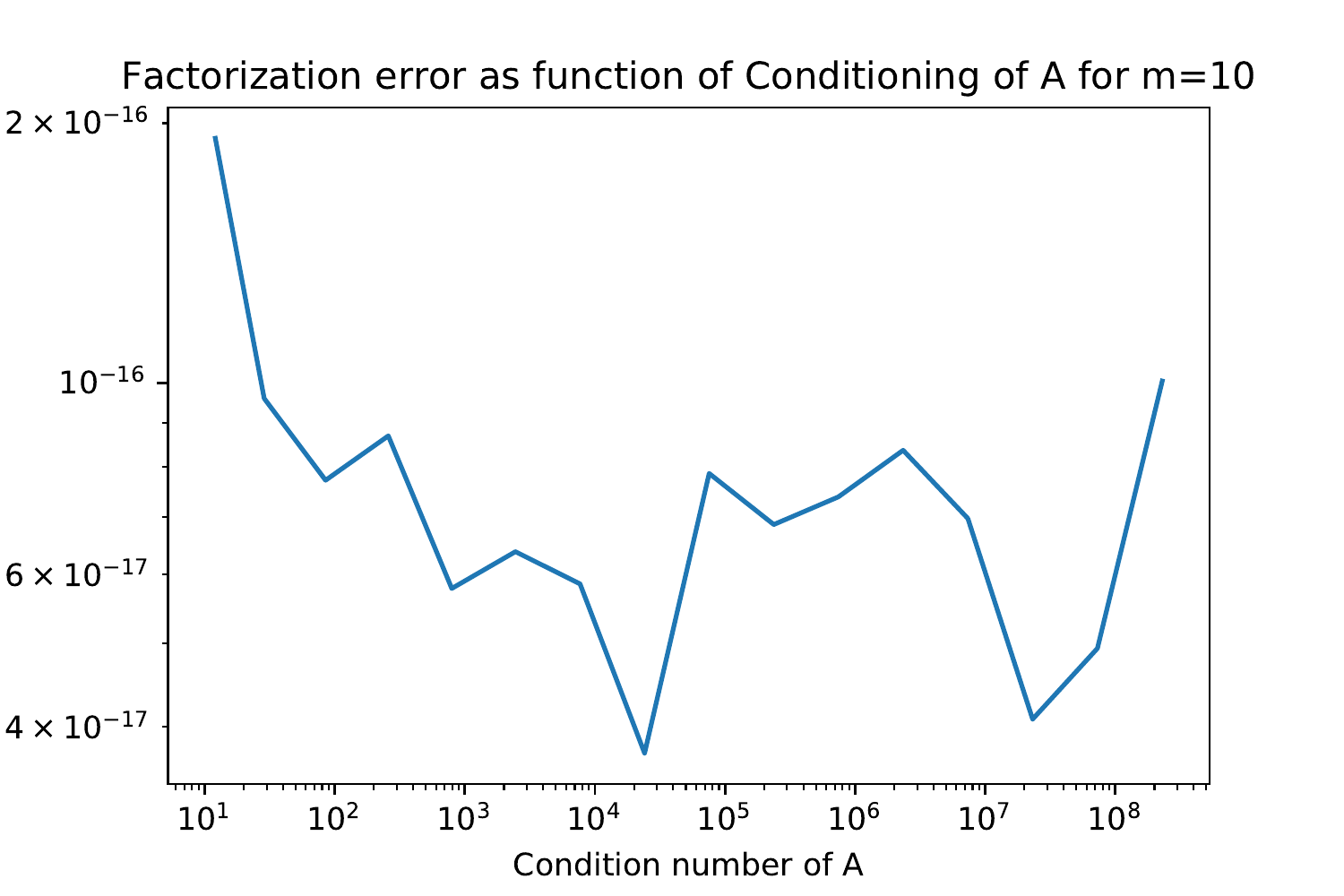}
  \includegraphics[scale=0.5]{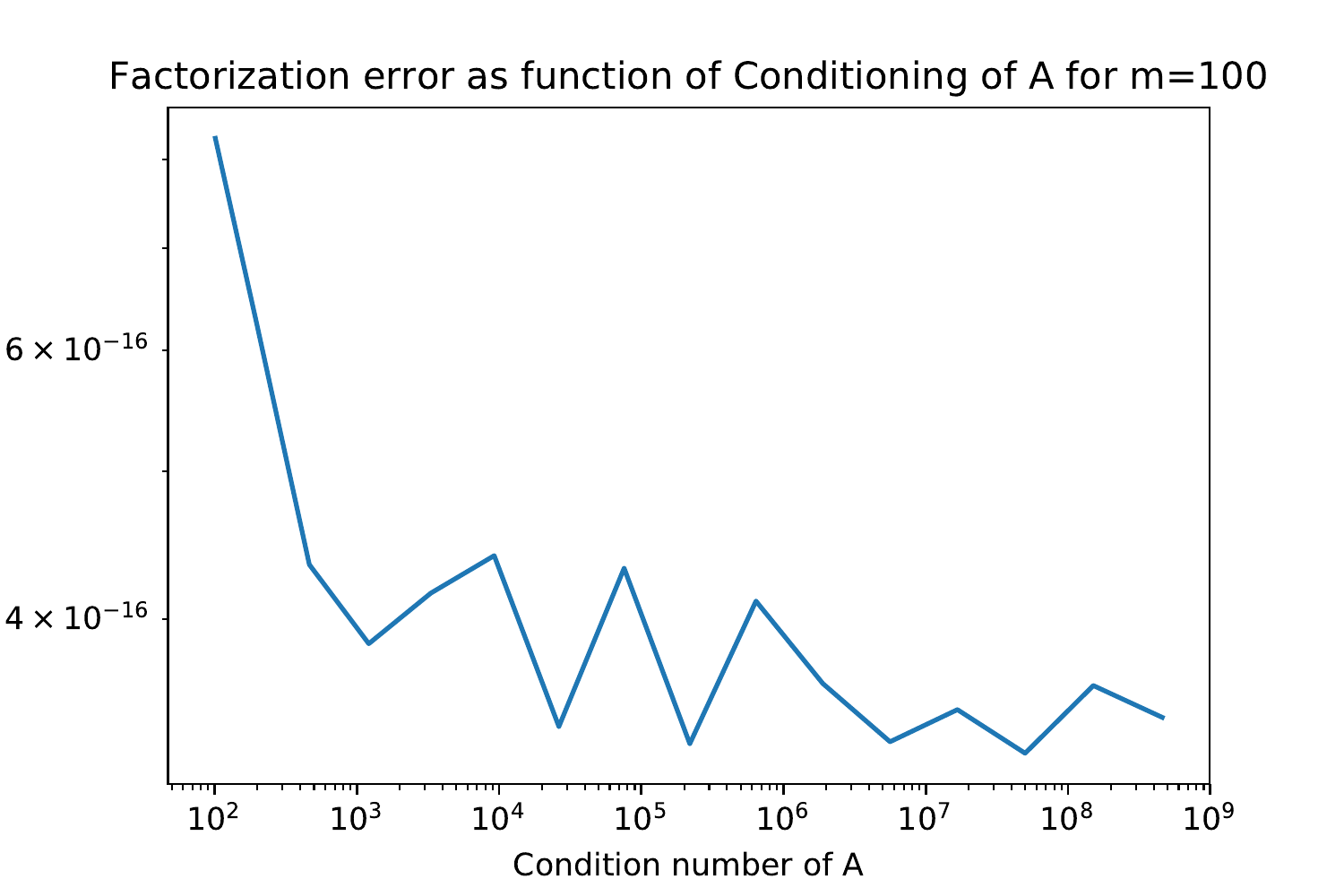}
  \caption{Factorization error $\|AP - QR \|_1$ for $m=10,100$.}
  \label{fig:error}
\end{figure}

\subsection{Low-rank approximation}

With the rank-revealing properties validated I now show an example of low-rank approximation.

For this test I again generate $A$ by forming it as an explicit SVD factorization 
$A = U\Sigma V^T$ with $\max_i \Sigma_{i,i} =1$ and $\min_i \Sigma{i,i}=10^{-6}$. I then
compute two factorizations of $A$:

\begin{align}
  AP_1 &= Q_1R_1 \text{ l1 RRQR factorization } \\
  AP_2 &= Q_2R_2 \text{ Classic RRQR factorization}
\end{align}

Next I truncate the factorizations to be a rank-$k$ approximation to $A$ as follows:

\begin{align}
  A &\approx Q_1(:,1:k)R_1(1:k,:)P_1^T \\
  A &\approx Q_2(:,1:k)R_2(1:k,:)P_2^T
\end{align}

For the first study I compare the induced $l^1$ matrix norm
error of these approximations for $k=1,\ldots,60$
this is in figure \ref{fig:lowrankerror}.

\begin{figure}[H]
  \centering
  \includegraphics[scale=0.5]{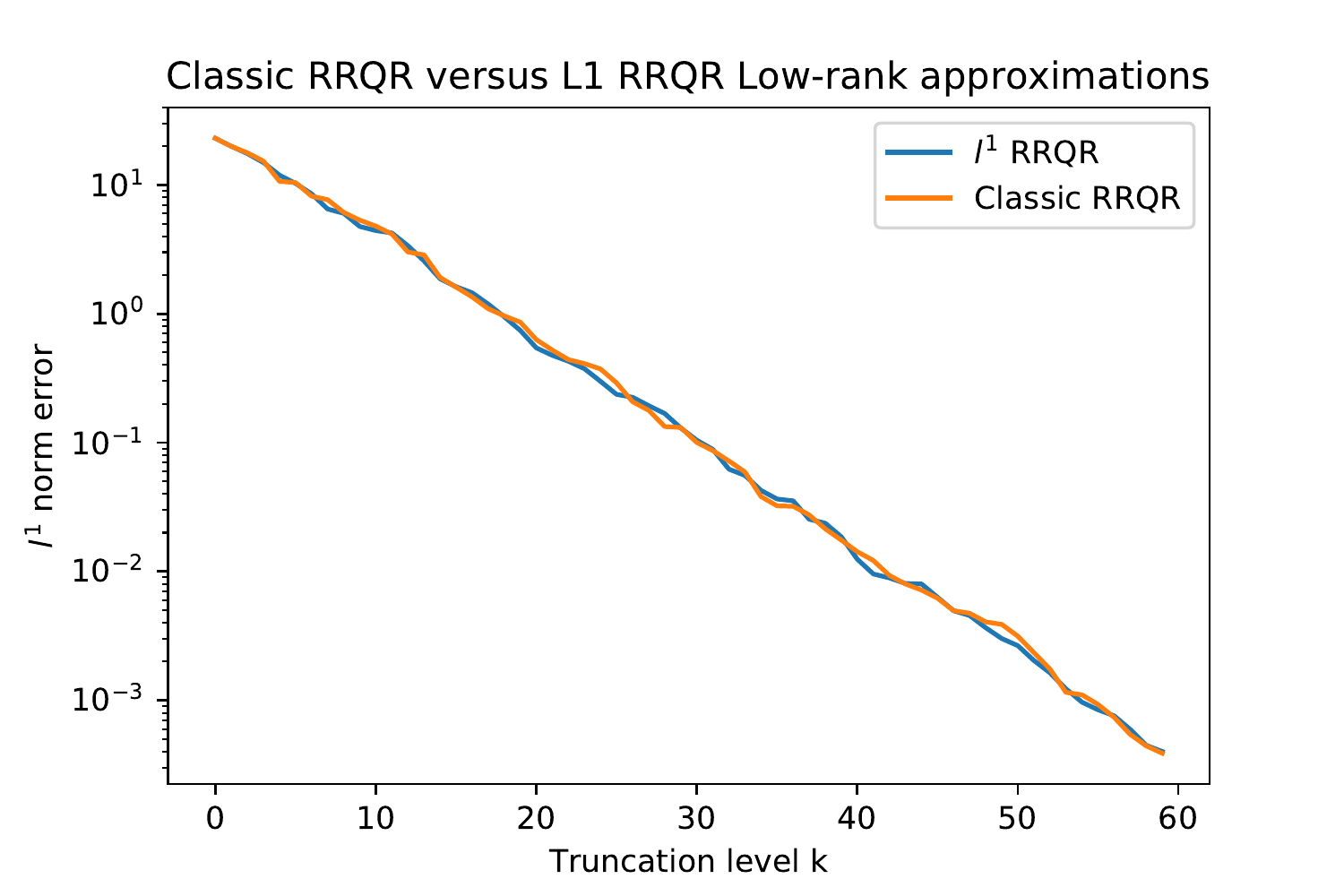}
  \caption{Low-rank approximation error for rank $k=1,\ldots,60$ and $m=100$}
  \label{fig:lowrankerror}
\end{figure}

This result suggests that there is little difference in results between $l^1$ and $l^2$
rank revealing factorizations. The next section \ref{sec:outlier} shows the subtle
difference between $l^1$ and $l^2$ norms for low-rank approximations, and why 
the $l^1$ norm may be preferred in some situations.

\subsection{Resistance of $l^1$ norm to outliers} \label{sec:outlier}

One of the original motivations for deriving algorithm \ref{alg:rrqr}
was to be able to do $l^1$ low-rank approximations, which can be 
very robust with respect to outliers in data \cite{candes_robust_2011}.

I show here that to some extent this appears to be reflected in the $l^1$ version
of the rank revealing factorization. To illustrate this I first show a "clean" example
without outliers, and then do rank $k$ approximations for $k=1,2,3$ for both
classical RRQR and $l^1$ RRQR. Then I introduce outliers to this same data and show
the classical RRQR algorithm quickly is drawn to over-resolve outlier data because
of how it dominates the Euclidean norm.

The first case are the low-rank approximations without outliers in the input data

\begin{figure}[H]
  \centering
  \includegraphics[scale=0.5]{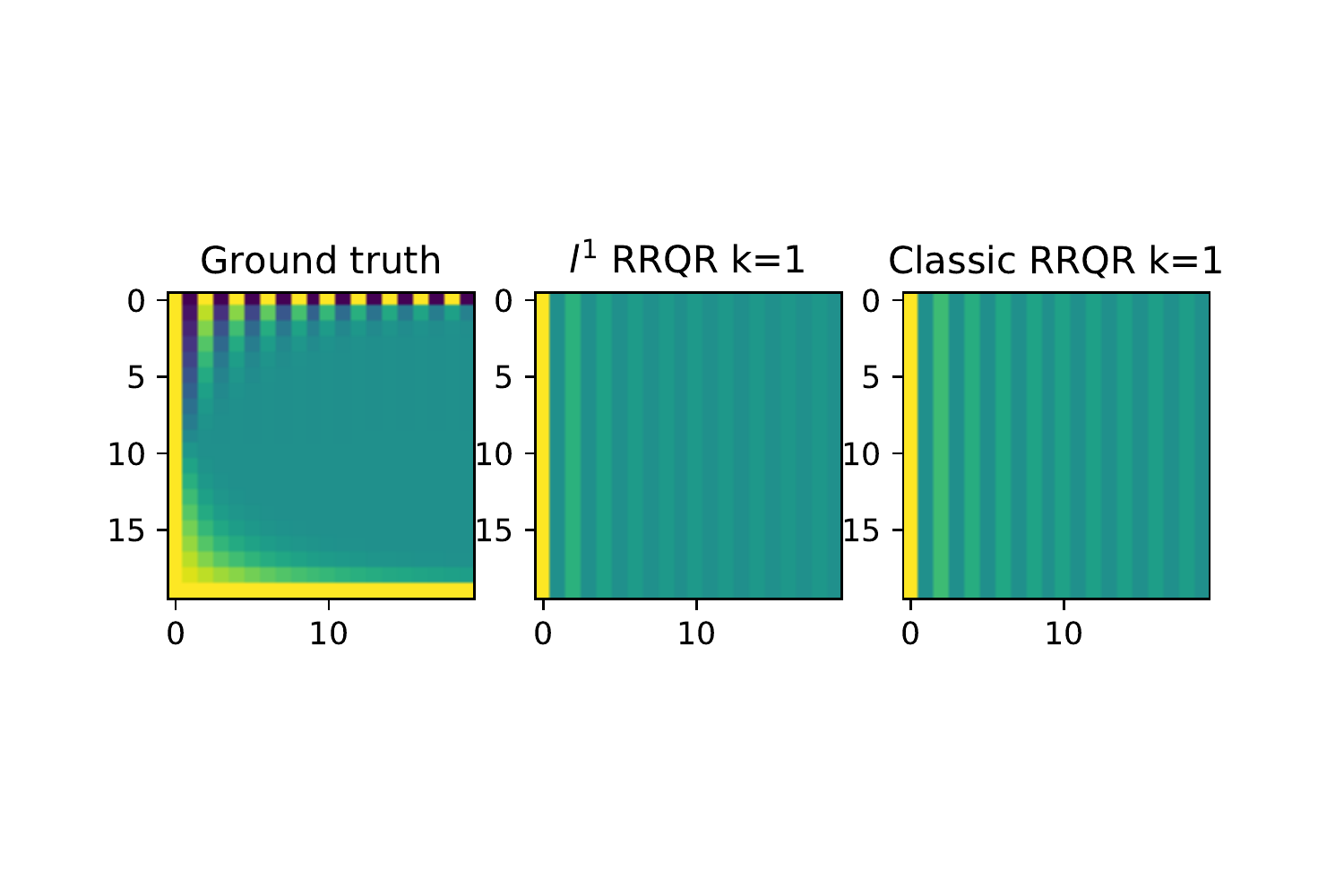}
  \includegraphics[scale=0.5]{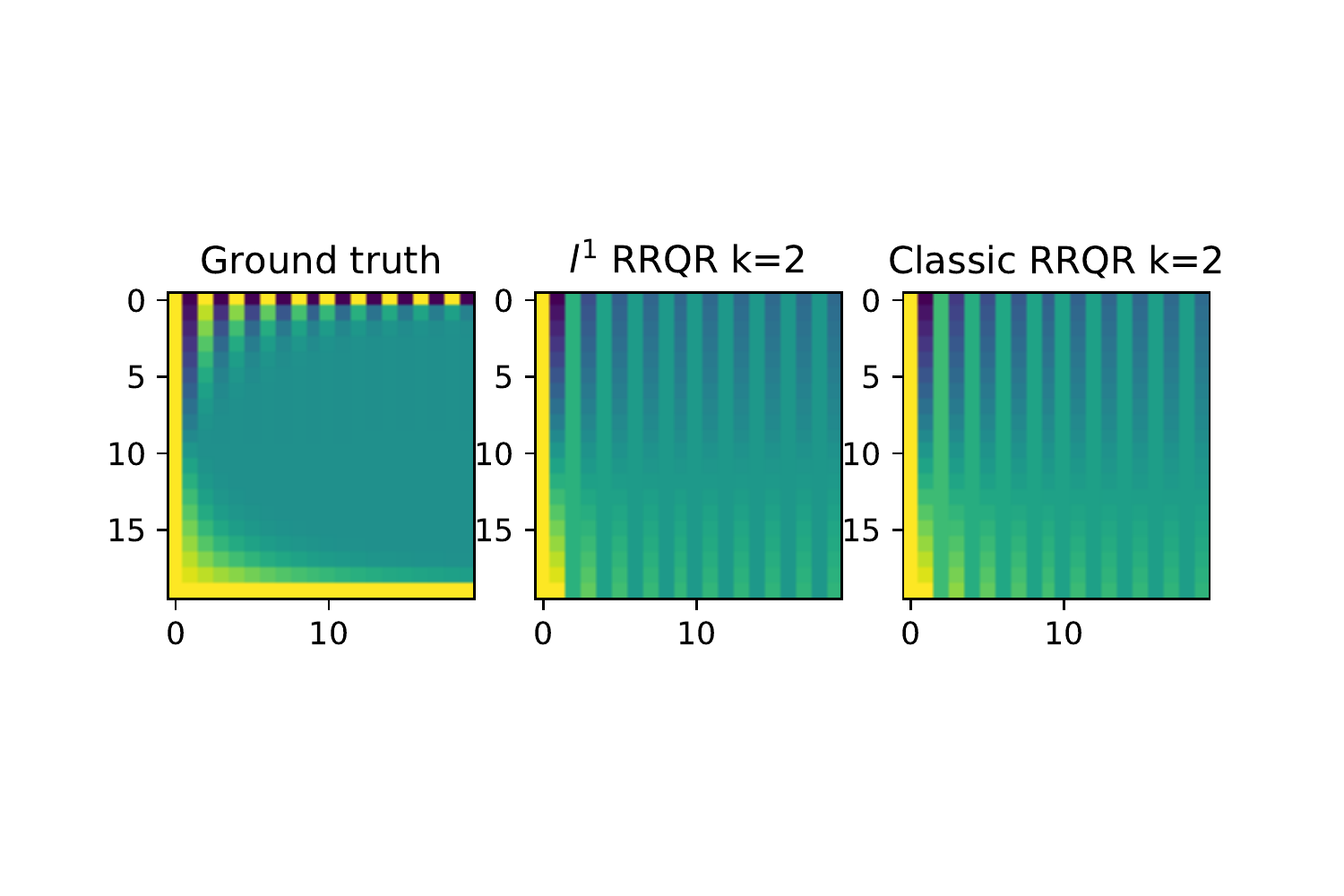}
  \includegraphics[scale=0.5]{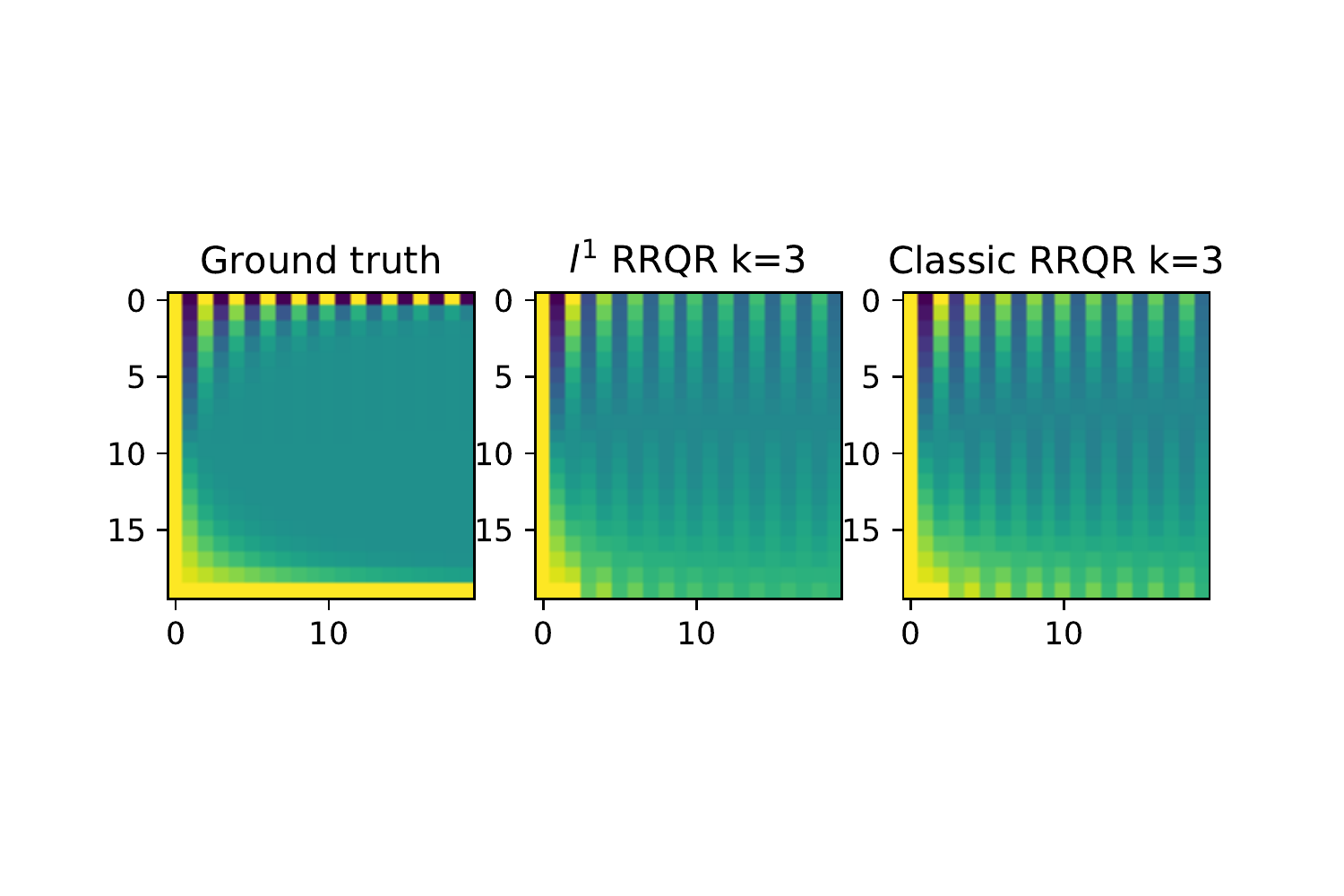}
  \caption{Low rank approximations of clean data using both classical and $l^1$ RRQR factorizations}
  \label{fig:sidebyside}
\end{figure}

The next case I introduce two outliers of much larger magnitude than surrounding data. The
classical RRQR quickly gravitates to the columns containing these outliers 
because the outlier data gets squared in the Euclidean norm and then dominates it. 
In the $l^1$ norm however this effect is much less pronounced. See figure \ref{fig:sidebyside2}
below.

\begin{figure}[H]
  \centering
  \includegraphics[scale=0.5]{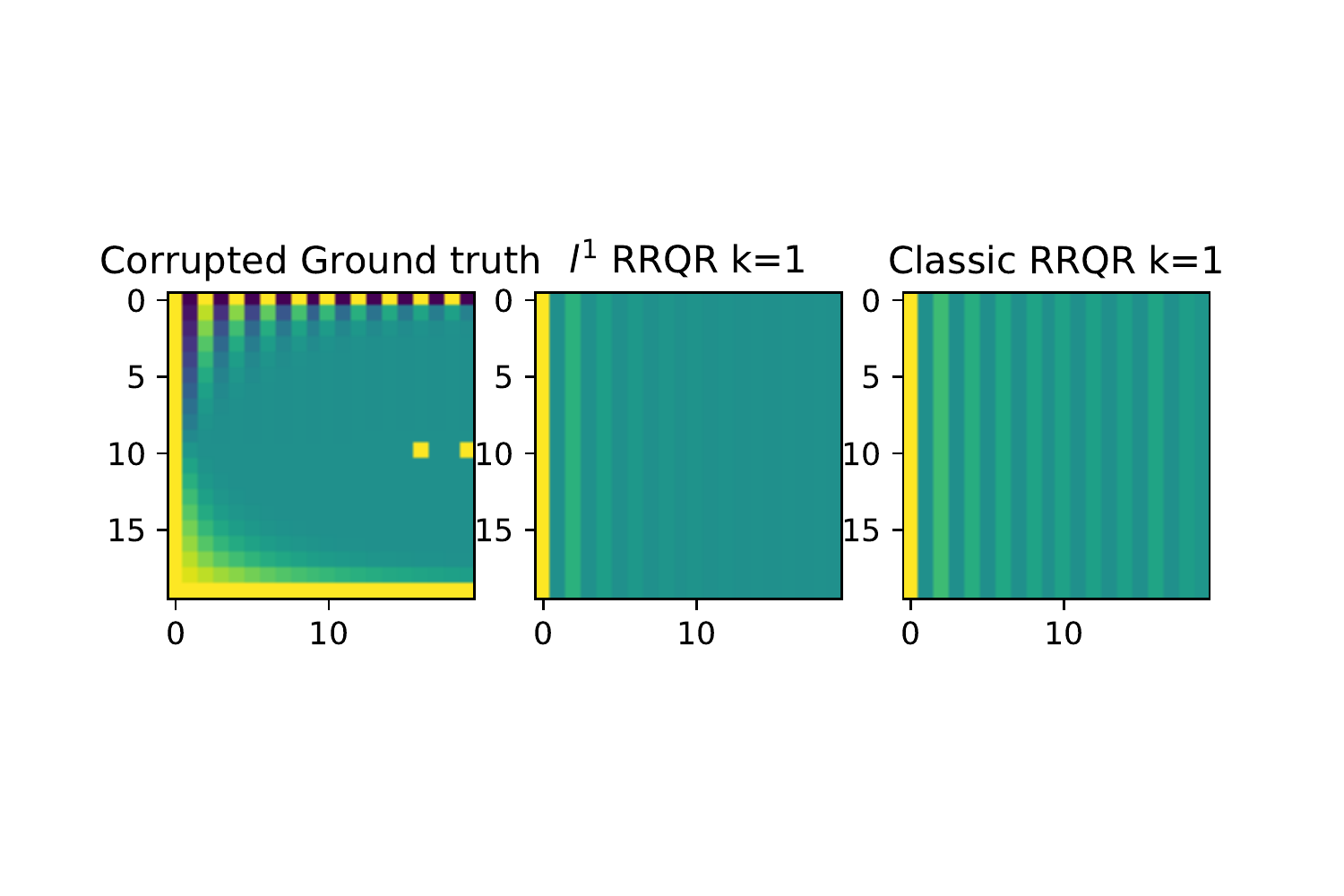}
  \includegraphics[scale=0.5]{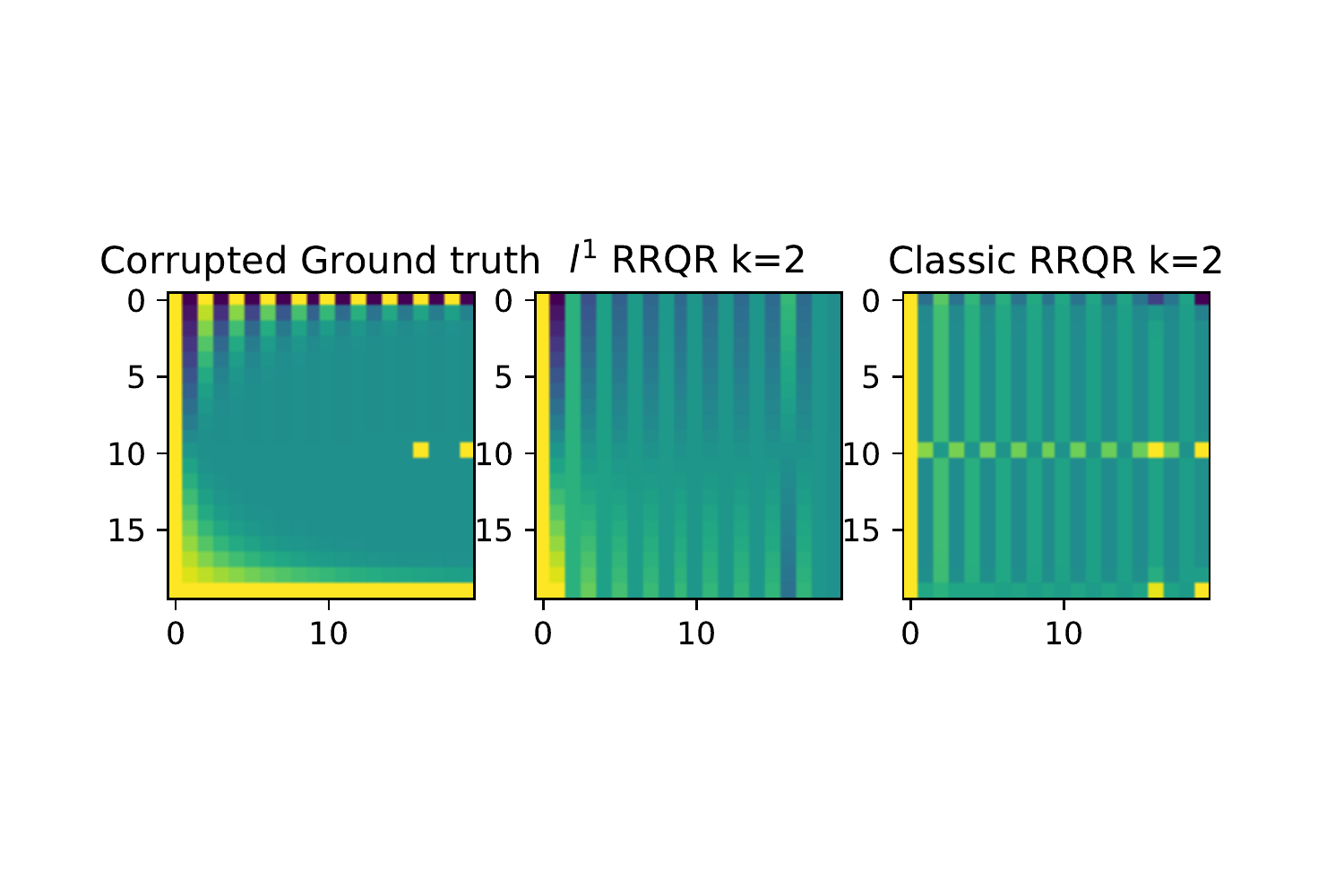}
  \includegraphics[scale=0.5]{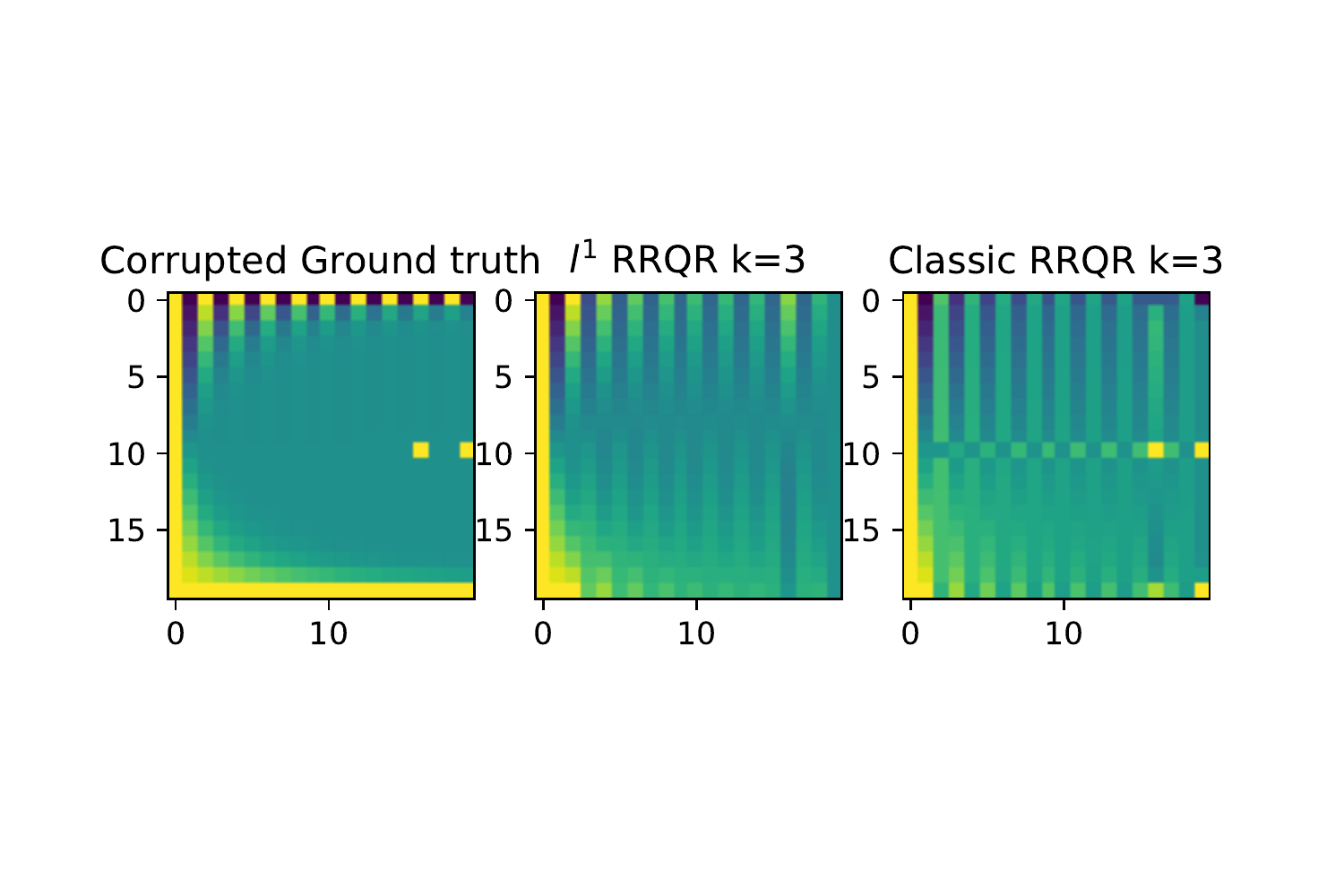}
  \caption{Low rank approximations of corrupted data using both classical and $l^1$ RRQR factorizations}
  \label{fig:sidebyside2}
\end{figure}

\section{Conclusions}
\label{sec:conclusions}

I derived a rank-revealing factorization that shares some similarities
to classic rank-revealing QR with column pivoting. Instead of 
the $Q$ factor being orthogonal it has conditioning that is independent
of the conditioning of $A$. Furthermore the rank-revealing factorization
presented here does not depend strictly on using dot products and the
$l^2$ norm. I validated that claim by implementing the algorithm
for the $l^1$ norm case, where least-norm-solution is equivalent to a linear program. 

While I was able to numerically validate the conditioning properties of $Q$ 
I was unable to mathematically prove them. The key fact to be proven
remains conjecture (\ref{con:goodcondition}). Without orthogonality
properties available in the $l^2$ case most avenues for
proof are lost. I believe however that careful use of the optimality
properties for the least-norm solution $c_j$ (see eqns \ref{eq:recurrence})
may be able to overcome the loss of orthogonality.

\appendix
\section{Python Implementations}

This section contains python implementations for the key algorithms of this paper. 
I give three implementations here. The first two, implementations
\ref{impl:opensource} and \ref{impl:nag} solve the same problem, and may be used
interchangeably in the following implementation \ref{impl:rrqr} which actually
computes the rank-revealing factorization. 

The first algorithm solves linear systems in the least-$l^1$-norm sense by translating
it to a linear program and then using SciPy. This has the advantage of only requiring
open source tools that are readily available on the internet.

\begin{figure}[H]
\begin{lstlisting}[language=Python] 
import numpy as np
import scipy.optimize as opt
def lst1norm(A,b):
    (m,n)=A.shape
    nvars=m+n
    ncons=2*m
    cons=np.zeros((ncons,nvars))
    cons[0:m,0:m]=-np.identity(m)
    cons[0:m,m:m+n]=A
    cons[m:2*m,0:m]=-np.identity(m)
    cons[m:2*m,m:m+n]=-A
    c=np.zeros(nvars)
    c[0:m]=1.0
    ub=np.zeros(ncons)
    ub[0:m]=b
    ub[m:2*m]=-b
    bounds=[]
    for i in range(0,m):
        bounds.append( (0,None) )
    for i in range(m,m+n):
        bounds.append((None,None))
    out=opt.linprog(c,cons,ub,
    None,None,bounds,
    options={'maxiter' : 10000,'tol':1e-6},
    method='interior-point')
    return out
\end{lstlisting}
  \caption{Solve $Ax=b$ in the minimum-$l^1$ sense using open source tools}
  \label{impl:opensource}
\end{figure}

The next Python implementation makes use of the NAG library 
routine \emph{e02ga} through the Nag Library for Python \cite{naglibrarypython}.
Full documentation for this routine may be found at \cite{naglonefit}. This
relies on the closed-source NAG library, but since the \emph{e02ga} routine
is specialized to the least-$l^1$-norm problem it is significantly faster
than a generic linear-programming approach as shown above. This is important
for the rank revealing factorization because it spends almost all of its
time solving linear systems in this minimum-norm sense. This enables
factorizing much larger matrices.

\begin{figure}[H]
\begin{lstlisting}[language=Python]
import numpy as np
from naginterfaces.library.fit import glin_l1sol
def lst1norm_nag_glin(A,b):
    (m,n)=A.shape
    B=np.zeros((m+2,n+2))
    B[0:m,0:n]=A
    (_,_,soln,objf,rank,i)=glin_l1sol(B,b)
    return (soln[0:n],objf)
\end{lstlisting}
  \caption{Solve $Ax=b$ in the minimum-$l^1$ sense using the NAG library}
  \label{impl:nag}
\end{figure}

Finally the actual factorization. As mentioned above, this factorization depends
on the ability to solve linear systems in the least-norm sense. Since I gave
two possible ways to achieve this, I made the least-norm-solver an input argument
which may be changed either to the fully open source solver, or to the faster
NAG-based solver.

\begin{figure}[H]
\begin{lstlisting}[language=Python]
import numpy as np
def l1rrqr(A,tol=1e-15,l1alg=lst1norm_nag_glin):
    (m,n)=A.shape
    Q=np.zeros((m,n))
    R=np.zeros((n,m))
    norms=[np.linalg.norm(A[:,i],ord=1) for i in range(0,n)]
    k=np.argmax(norms)
    perm=[k]
    sout=set(perm)
    sin=set([i for i in range(0,n)]).difference(sout)
    i=0
    Q[:,i]=A[:,k]/np.linalg.norm(A[:,k],ord=1)
    R[i,i]=np.linalg.norm(A[:,k],ord=1)
    while sin:
        i=i+1
        V=Q[:,0:i]
        vals=[l1alg(V,A[:,i])[1] for i in sin]
        ids=[i for i in sin]
        k=ids[np.argmax(vals)]
        sout.add(k)
        sin=sin.difference(sout)
        perm.append(k)
        (c,objf)=l1alg(V,A[:,k])
        y=A[:,k]-V@c
        Q[:,i]=y/np.linalg.norm(y,ord=1)
        R[0:i,i]=c
        R[i,i]=np.linalg.norm(y,ord=1)
    return (Q,R,perm)
\end{lstlisting}
  \caption{A Python implementation of the arbitrary-norm rank-revealing factorization}
  \label{impl:rrqr}
\end{figure}

\bibliographystyle{siamplain}
\bibliography{mylib}

\begin{thebibliography}{10}

\bibitem{candes_robust_2011}
{\sc E.~J. Candès, X.~Li, Y.~Ma, and J.~Wright}, {\em Robust {Principal}
  {Component} {Analysis}?}, J. ACM, 58 (2011), pp.~11:1--11:37,
  \url{https://doi.org/10.1145/1970392.1970395},
  \url{http://doi.acm.org/10.1145/1970392.1970395} (accessed 2019-05-04).

\bibitem{chan_rank_1987}
{\sc T.~F. Chan}, {\em Rank revealing {QR} factorizations}, Linear Algebra and
  its Applications, 88-89 (1987), pp.~67--82,
  \url{https://doi.org/10.1016/0024-3795(87)90103-0},
  \url{http://www.sciencedirect.com/science/article/pii/0024379587901030}
  (accessed 2019-05-05).

\bibitem{hackbusch_sparse_1999}
{\sc W.~Hackbusch}, {\em A {Sparse} {Matrix} {Arithmetic} {Based} on
  \${\textbackslash}{Cal} {H}\$-{Matrices}. {Part} {I}: {Introduction} to
  \$\{{\textbackslash}{Cal} {H}\}\$-{Matrices}}, Computing, 62 (1999),
  pp.~89--108, \url{https://doi.org/10.1007/s006070050015},
  \url{https://doi.org/10.1007/s006070050015} (accessed 2019-05-06).

\bibitem{hackbusch_introduction_2002}
{\sc W.~Hackbusch, L.~Grasedyck, and S.~Börm}, {\em An introduction to
  hierarchical matrices}, Mathematica Bohemica, v.127, 229-241 (2002), 127
  (2002).

\bibitem{jones_scipy:_2001}
{\sc E.~Jones, T.~Oliphant, P.~Peterson, and {others}}, {\em {SciPy}: {Open}
  source scientific tools for {Python}}, 2001, \url{http://www.scipy.org/}.

\bibitem{markopoulos_efficient_2017}
{\sc P.~P. Markopoulos, S.~Kundu, S.~Chamadia, and D.~A. Pados}, {\em Efficient
  {L}1-{Norm} {Principal}-{Component} {Analysis} via {Bit} {Flipping}}, IEEE
  Transactions on Signal Processing, 65 (2017), pp.~4252--4264,
  \url{https://doi.org/10.1109/TSP.2017.2708023},
  \url{http://arxiv.org/abs/1610.01959} (accessed 2019-04-28).
\newblock arXiv: 1610.01959.

\bibitem{markopoulos_outlier-resistant_2018}
{\sc P.~P. Markopoulos, S.~Kundu, S.~Chamadia, N.~Tsagkarakis, and D.~A.
  Pados}, {\em Outlier-{Resistant} {Data} {Processing} with {L}1-{Norm}
  {Principal} {Component} {Analysis}}, in Advances in {Principal} {Component}
  {Analysis}: {Research} and {Development}, G.~R. Naik, ed., Springer
  Singapore, Singapore, 2018, pp.~121--135,
  \url{https://doi.org/10.1007/978-981-10-6704-4_6},
  \url{https://doi.org/10.1007/978-981-10-6704-4_6} (accessed 2019-05-04).

\bibitem{naglibrarypython}
{\sc {The Numerical Algorithms Group (NAG)}}, {\em The nag library},
  \url{www.nag.com}.

\bibitem{naglibrary}
{\sc {The Numerical Algorithms Group (NAG)}}, {\em The nag library for python},
  \url{www.nag.com}.

\bibitem{naglonefit}
{\sc {The Numerical Algorithms Group (NAG)}}, {\em Nag routine e02ga},
  \url{https://www.nag.com/numeric/cl/nagdoc_latest/html/e02/e02gac.html}.

\bibitem{tsagkarakis_l1-norm_2017}
{\sc N.~Tsagkarakis, P.~P. Markopoulos, and D.~A. Pados}, {\em L1-norm
  {Principal}-{Component} {Analysis} of {Complex} {Data}},  (2017),
  \url{https://doi.org/10.1109/TSP.2018.2821641},
  \url{https://arxiv.org/abs/1708.01249v1} (accessed 2019-04-21).

\bibitem{walt_numpy_2011}
{\sc S.~v.~d. Walt, S.~C. Colbert, and G.~Varoquaux}, {\em The {NumPy} {Array}:
  {A} {Structure} for {Efficient} {Numerical} {Computation}}, Computing in
  Science Engineering, 13 (2011), pp.~22--30,
  \url{https://doi.org/10.1109/MCSE.2011.37}.

\end{thebibliography}
\end{document}